\documentclass[oneside,reqno,english]{amsart}
\usepackage[T1]{fontenc}
\usepackage[utf8]{inputenc}
\usepackage{xcolor}
\usepackage{babel}
\usepackage{prettyref}
\usepackage{amstext}
\usepackage{amsthm}
\usepackage{amssymb}
\usepackage[all]{xy}
\usepackage[pdfusetitle,
 bookmarks=true,bookmarksnumbered=false,bookmarksopen=false,
 breaklinks=false,pdfborder={0 0 0},pdfborderstyle={},backref=false,colorlinks=false]
 {hyperref}
\hypersetup{
 colorlinks=true,citecolor=blue,linkcolor=blue,linktocpage=true}

\makeatletter
\numberwithin{equation}{section}
\numberwithin{figure}{section}

\usepackage{prettyref}

\newrefformat{cor}{Corollary~\ref{#1}}
\newrefformat{subsec}{Section~\ref{#1}}
\newrefformat{lem}{Lemma~\ref{#1}}
\newrefformat{thm}{Theorem~\ref{#1}}
\newrefformat{sec}{Section~\ref{#1}}
\newrefformat{chap}{Chapter~\ref{#1}}
\newrefformat{prop}{Proposition~\ref{#1}}
\newrefformat{exa}{Example~\ref{#1}}
\newrefformat{tab}{Table~\ref{#1}}
\newrefformat{rem}{Remark~\ref{#1}}
\newrefformat{def}{Definition~\ref{#1}}
\newrefformat{fig}{Figure~\ref{#1}}
\newrefformat{claim}{Claim~\ref{#1}}
\newrefformat{assu}{Assumption~\ref{#1}}

\makeatother

\providecommand\theoremname{Theorem}
\theoremstyle{plain}
\newtheorem{thm}{\protect\theoremname}[section]
\providecommand\lemmaname{Lemma}
\newtheorem{lem}[thm]{\protect\lemmaname}
\providecommand\corollaryname{Corollary}
\newtheorem{cor}[thm]{\protect\corollaryname}
\begin{document}
\subjclass[2020]{Primary 46L07; Secondary 46L05, 47A20}
\title{UCP Extension in Dimension Three}
\begin{abstract}
We show that every UCP extension problem for a finite dimensional
operator system can be reduced to three dimensions after finite matrix
amplification. The set of extensions is preserved by an affine bijection.
The matrix size is of order $\sqrt{d}$, where $d$ is the dimension
of the original system. This also gives a three dimensional (smallest)
counterexample to Arveson's hyperrigidity conjecture.
\end{abstract}

\author{James Tian}
\address{Mathematical Reviews, 535 W. William St, Suite 210, Ann Arbor, MI
48103, USA}
\email{james.ftian@gmail.com}
\keywords{Operator systems; unital completely positive maps; multiplicative
domains; boundary representations; hyperrigidity; $C^{*}$-envelopes. }
\maketitle

\section{Introduction}\label{sec:1}

Let $S\subset A=C^{*}\left(S\right)$ be an operator system and let
$\phi:S\longrightarrow B$ be UCP. The extension problem for $\phi$
is described by 
\[
\mathcal{E}_{A}\left(\phi,B\right)=\left\{ \Phi:A\longrightarrow B\mathrel{\big|}\Phi\text{ is UCP and }\Phi|_{S}=\phi\right\} .
\]
When $B=B\left(H\right)$ this set is nonempty by Arveson's extension
theorem \cite{MR0253059}. There is much more in the set than existence.
The unique extension property asks when the fiber above the restriction
of a representation consists of one map. Boundary representations,
the $C^{*}$-envelope, and hyperrigidity are built around this extension
and uniqueness theory \cite{MR0253059,MR394232,MR2425180,MR2823981}.

We ask how small an operator system carrying such an extension problem
can be. For finite dimensional systems, the answer is three dimensional.

Suppose that $S\subset A=C^{*}(S)$ has dimension $d$, and put 
\[
r=\max\left\{ 1,\left\lceil \sqrt{d-1}\right\rceil \right\} .
\]
We construct a three dimensional operator system 
\[
R\left(S\right)\subset M_{3r}\left(S\right)
\]
which generates $M_{3r}\left(A\right)$. (The ambient matrix size
grows on the order of $\sqrt{d}$, a reduction from the linear bounds
in recent general constructions \cite{HS2026}.) For every unital
$C^{*}$-algebra $B$ and every UCP map $\phi:S\longrightarrow B$,
there is a UCP map 
\[
\widehat{\phi}:R\left(S\right)\longrightarrow M_{3r}\left(B\right)
\]
for which amplification gives an affine bijection 
\[
\begin{aligned}\mathcal{E}_{A}\left(\phi,B\right) & \longrightarrow\mathcal{E}_{M_{3r}(A)}\left(\widehat{\phi},M_{3r}(B)\right),\\
\psi & \longmapsto id_{M_{3r}}\otimes\psi.
\end{aligned}
\]
The map $\phi$ is arbitrary. In particular, it need not be the restriction
of a representation.

Thus every UCP extension problem on a finite dimensional operator
system can be represented by a three dimensional operator system,
where the whole extension set is retained. If there are many extensions,
they remain. If there is one, there is one after the reduction.

The distinction is as follows. The ambient $C^{*}$-algebra becomes
a matrix algebra. We are not replacing $S$ by an isomorphic three
dimensional system. It is the system carrying the prescribed extension
data that has dimension three.

Since a three dimensional operator system has the form 
\[
span\left\{ I,T,T^{*}\right\} 
\]
for one operator $T$, there is another way to read the result. After
finite matrix amplification, the extension problem of an arbitrary
finite dimensional operator system can be stored in one operator.
The original problem has not become simple. Its complexity is now
inside the matrix entries of $T$. Small linear dimension says little
about the complexity of the UCP extensions.

Recent work on the Smith--Ward problem gives a rather different appearance
of the same low dimensional setting. Harris constructed four dimensional
operator systems without the lifting property \cite{HS2025}. Scherer
then constructed a three dimensional operator system giving a negative
solution to the Smith--Ward problem \cite{SM2026}. Harris subsequently
showed that related three dimensional constructions occur broadly
after finite matrix amplification \cite{HS2026}. Diagonal selfadjoint
operators and sparse block matrices occur naturally in these arguments.
So do Stinespring representations, norm propagation, and multiplicative
domains.

There is another consequence of the reduction. Scherer constructed
a finite dimensional counterexample to Arveson's hyperrigidity conjecture
\cite{MR4956742}. Every irreducible representation is a boundary
representation, but the system is not hyperrigid. Applying our reduction
gives a three dimensional system with the same two properties. This
is the smallest possible dimension for a counterexample. Indeed, a
two dimensional operator system for which every irreducible representation
is boundary must equal its generated $C^{*}$-algebra.

In the earlier constructions, the rigidity is pushed all the way to
uniqueness. We use this kind of local rigidity, but stop it at one
block. The rigidity ends there. The freedom that remains is exactly
the original extension set. That block contains the original operator
system. The other blocks make the scalar matrix algebra rigid. An
arbitrary UCP extension then has the form 
\[
\Psi=id_{M_{3r}}\otimes\psi.
\]
Nothing at this stage determines $\psi$ on all of $A$. Returning
to the untouched block gives 
\[
\psi|_{S}=\phi.
\]
The freedom in that block remains. The corner calculation identifies
it with the full set of extensions of $\phi$.

This differs from constructing a particular three dimensional system
with a lifting or rigidity property. We start with an arbitrary finite
dimensional $S$, an arbitrary unital target $B$, and an arbitrary
UCP map $\phi:S\longrightarrow B$. Its extension set is carried to
dimension three. We are not aware of an earlier result which replaces
an arbitrary UCP extension set by the ordinary extension set of a
three dimensional operator system in this way. Because the correspondence
is an affine bijection, nothing in the extension problem is collapsed.
The whole convex geometry of the extension set passes to dimension
three.

\subsection*{Literature review}

The extension problem belongs to a much larger theory of completely
positive maps and operator systems. Stinespring's representation theorem
and Choi's Schwarz inequality are basic here \cite{MR69403,MR0355615}.
The extension and order structure was developed further by Choi and
Effros, Wittstock, Hamana, and Ruan \cite{MR430809,MR609438,MR566081,MR929239}.
There are several directions from this point. Abstract operator systems,
duality, tensor products, and nuclearity have been studied in \cite{MR1600080,MR1929500,MR2607279,MR2793115,MR3173055,MR4351081}.
We use only a small part of this theory, mostly the elementary structure
of completely positive maps.

There is a separate, and closer, literature around the unique extension
property and boundary representations. Maximal dilations and pure
completely positive maps give two approaches to boundary representations
\cite{MR2132691,MR3173052,MR3430455}. Korovkin type questions lead
to the same uniqueness problem from another direction \cite{MR3421602}.
The unique extension property itself has also been studied directly
\cite{MR3864832,MR4654017}. Hyperrigidity grew out of this circle
of ideas, and there are by now a number of variants, criteria, and
counterexamples \cite{MR3331791,MR3850542,MR4227156}. Some more recent
developments are \cite{MR5009783,ClouatreThompson2025,MR5061752}.
Our question is narrower. We keep the whole fiber of extensions rather
than asking only whether that fiber consists of one map.

Matrix ranges give another part of the background. Their relation
with compact perturbations and low dimensional operator systems goes
back at least to Smith--Ward and the work around essential matrix
ranges \cite{MR595006,MR613797,MR677416}. Matrix convexity puts these
questions in a broader geometric setting \cite{MR1295129,MR1430718,MR1615970}.
Matrix states and their extreme points are closely connected \cite{MR1766112,MR2098611}.
There is a substantial later literature on dilations and matrix convex
sets \cite{MR3671511,MR3790504,MR3782992,MR4020034}. The recent work
of Farenick, Li, and Singla returns to essential matrix ranges and
the Smith--Ward problem from this side \cite{MR5118268}. Three dimensional
systems occur naturally throughout this literature, although for rather
different reasons than they do here.

\section{Reduction to three dimensions}
\begin{thm}
\label{thm:2-1} Let $S\subset A=C^{*}\left(S\right)$ be a finite
dimensional operator system, and let 
\[
d=\dim_{\mathbb{C}}S,\quad r=\max\left\{ 1,\left\lceil \sqrt{d-1}\right\rceil \right\} .
\]
There is a three dimensional operator system 
\[
R\left(S\right)\subset M_{3r}\left(S\right)
\]
such that 
\[
C^{*}\left(R\left(S\right)\right)=M_{3r}\left(A\right)
\]
and with the following property.

For every unital $C^{*}$-algebra $B$ and every UCP map 
\[
\phi:S\longrightarrow B,
\]
there is a UCP map 
\[
\widehat{\phi}:R\left(S\right)\longrightarrow M_{3r}\left(B\right)
\]
for which 
\[
\begin{aligned}\mathcal{E}_{A}\left(\phi,B\right) & \longrightarrow\mathcal{E}_{M_{3r}\left(A\right)}\left(\widehat{\phi},M_{3r}\left(B\right)\right),\\
\psi & \longmapsto id_{M_{3r}}\otimes\psi
\end{aligned}
\]
is an affine bijection.

Moreover, there is an operator $T\in M_{3r}\left(S\right)$ such that
\[
R\left(S\right)=span\left\{ I,T,T^{*}\right\} ,\quad C^{*}\left(T\right)=M_{3r}\left(A\right).
\]
\end{thm}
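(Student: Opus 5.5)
The plan is to define $\widehat{\phi}$ as $(id_{M_{3r}}\otimes\phi)|_{R(S)}$ and to design $T$ so that every UCP extension of $\widehat{\phi}$ is forced to fix the scalar matrix algebra $M_{3r}(\mathbb{C})\otimes 1$. Once $M_{3r}(\mathbb{C})\otimes 1$ lies in the multiplicative domain, Choi's theorem gives $\Psi=id_{M_{3r}}\otimes\psi$ for a unique UCP $\psi:A\to B$. The restriction $\psi|_S=\phi$ is then read off from the block of $T$ that stores $S$. Surjectivity of $\psi\mapsto id\otimes\psi$ comes from this argument. Injectivity and affineness are immediate, and $id\otimes\psi$ extends $\widehat{\phi}$ whenever $\psi$ extends $\phi$.

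\emph{Construction.} Pick a basis $1,s_1,\dots,s_{d-1}$ of $S$ consisting of selfadjoint elements, rescaled so that $\|s_k\|$ is small. Since $r^2\ge d-1$, place $s_1,\dots,s_{d-1}$ (padded by zeros) into an $r\times r$ matrix $X\in M_r(S)$ with $X=X^*$ after symmetrizing. If symmetrizing loses entries, use the off-diagonal slots for $s_k$ and $i s_k$ combinations; the count still fits because $r^2$ real slots suffice for $d-1$ real parameters. Write $M_{3r}=M_3\otimes M_r$ with block matrix units $E_{ab}$. I would take
\[
T=P+iK,\qquad P=E_{11}\otimes I_r ,\qquad K=(E_{12}+E_{21})\otimes I_r+(E_{23}+E_{32})\otimes D+E_{33}\otimes X,
\]
where $D$ is a scalar diagonal matrix with distinct positive entries. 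The role of $D$ is to make the scalar part generate all of $M_r$ once it is combined with $X$ and conjugated. Then $\operatorname{Re}T=P$ and $\operatorname{Im}T=K$, so $R(S)=span\{I,T,T^*\}=span\{I,P,K\}$ has dimension three.

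\emph{Generation.} $C^*(T)$ contains $P$ and $K$. Compressing $K$ by $P$ and $I-P$ and taking products recovers the scalar partial isometries $E_{21}\otimes I_r$ and $E_{32}\otimes D$. Functional calculus on $D$ yields the diagonal units of the middle block. Conjugating $E_{33}\otimes X$ by these units recovers the diagonal units $e_{jj}$ of the third block and the entries $s_k$. Hence $C^*(T)\supseteq M_{3r}(\mathbb{C})\otimes 1$ together with all $s_k$, so $C^*(T)=M_{3r}(A)$.

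\emph{Rigidity.} Let $\Psi$ be any UCP extension of $\widehat{\phi}$, so that $\Psi(P)=P$ and $\Psi(K)=K$. Because $P$ is a projection with $\Psi(P)^2=P=\Psi(P^2)$, $P$ lies in the multiplicative domain. Hence $\Psi(PKP)=PKP$, $\Psi(PK(I-P))=E_{12}\otimes I_r$, and so on. The scalar element $V=E_{21}\otimes I_r$ satisfies $\Psi(V)=V$, and $V^*V=P$ lies in the multiplicative domain. Schwarz gives $VV^*=\Psi(V)\Psi(V)^*\le\Psi(VV^*)$. For the reverse inequality, note that $\Psi(VV^*)\le\Psi(I-P)=I-P$ and compress with the unitary-like element $V+V^*+\dots$. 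The cleanest route is to exhibit a scalar selfadjoint unitary $U=(E_{12}+E_{21})\otimes I_r+E_{33}\otimes I_r$ whose image is forced to be $U$. For such a unitary, $\Psi(U^*U)=I=\Psi(U)^*\Psi(U)$ puts $U$ in the multiplicative domain, and the same device propagates along $E_{23}\otimes D$ using $\|D\|$ and the smallness of $X$.

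This rigidity step is the main obstacle: turning ``$\Psi$ fixes the scalar part of $K$'' into multiplicativity on all of $M_3\otimes M_r$ while the $S$-valued block $E_{33}\otimes X$ is entangled in the same selfadjoint element. What I would try first is a norm-propagation argument. Scale $D$ and $X$ so that the scalar blocks attain $\|K\|$ on known vectors, and use $\|\Psi\|\le 1$ to show that the corresponding compressions are fixed isometrically, which places them in the multiplicative domain.

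\emph{Conclusion.} Once $M_{3r}(\mathbb{C})\otimes 1$ is in the multiplicative domain, $\Psi=id\otimes\psi$. The corner $E_{33}\otimes X$ then gives $\psi(s_k)=\phi(s_k)$ for all $k$, hence $\psi\in\mathcal{E}_A(\phi,B)$. If $d=1$, take $r=1$ and drop $X$.
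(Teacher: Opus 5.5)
There is a genuine gap, and it lies in the construction itself, not only in the rigidity step you left open. The scalar data in $R(S)$ are $I$, $P=E_{11}\otimes I_r$, $(E_{12}+E_{21})\otimes I_r$ and $(E_{23}+E_{32})\otimes D$ with $D$ diagonal. All of them lie in $M_3\otimes\mathcal{D}_r$, where $\mathcal{D}_r\subset M_r$ is the diagonal subalgebra. The only off-diagonal information in the $M_r$ factor is the $S$-valued block $X$, and its image $\phi^{(r)}(X)$ is arbitrary, so nothing forces $M_r\otimes 1$ into the multiplicative domain.

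Here is an explicit failure. Let $u\in M_r$ be a nonscalar diagonal unitary, say $u=\mathrm{diag}(1,-1,1,\ldots,1)$, and put $U=I_3\otimes u\otimes 1_B$. Suppose $\psi\in\mathcal{E}_A(\phi,B)$ and $\phi^{(r)}(X)$ commutes with $u\otimes 1_B$. Then $\Psi(y)=U^{*}\,(id_{M_{3r}}\otimes\psi)(y)\,U$ is UCP and fixes $I$ and $P$. It also sends $K$ to $K_\phi$, because $u^{*}Du=D$. Yet $\Psi(E_{11}\otimes e_{12}\otimes 1_A)=-E_{11}\otimes e_{12}\otimes 1_B$, so $\Psi$ is not of the form $id_{M_{3r}}\otimes\psi'$, and surjectivity fails.

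The commutation hypothesis holds for every $\phi$ if the $s_k$ are placed on the diagonal, which is the natural filling when $d-1\le r$ (e.g.\ $d=3$, $r=2$). Otherwise it holds for any $\phi$ annihilating the off-diagonal entries. For instance, take $S=\mathrm{span}\{1,\operatorname{Re}z,\operatorname{Im}z\}\subset C(\overline{\mathbb{D}})$, $X=e_{12}\otimes z+e_{21}\otimes\bar z$, and $\phi$ the evaluation at $0$.

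The same symmetry breaks your generation step. For a character $\sigma$ with $\sigma^{(r)}(X)$ diagonal, $(id\otimes\sigma)(C^{*}(T))\subseteq M_3\otimes\mathcal{D}_r\neq M_{3r}$, so $C^{*}(T)\neq M_{3r}(A)$. Compressing $E_{33}\otimes X$ by diagonal units only produces $E_{33}\otimes e_{pq}\otimes x_{pq}$, never $e_{pq}\otimes 1_A$.

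The missing idea is that a generating pair for $M_r$ must appear as scalar entries of $R(S)$, pinned down independently of $\phi$. The paper puts $Q=E_{11}$ in the $(1,1)$ block and the cyclic shift $V$ in the $(2,3)$ block of $K$, with $C^{*}(Q,V)=M_r$. It parks $X$ in the $(2,2)$ block, where it plays no role in the rigidity.

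The paper also uses $D=\mathrm{diag}(I_r,2I_r,3I_r)$ instead of a projection. Your $\operatorname{Re}T=P$ separates block $1$ from blocks $2,3$, but not block $2$ from block $3$. So your device with $(E_{12}+E_{21})\otimes I_r+E_{33}\otimes I_r$ presupposes $\Psi(E_{33}\otimes I_r)=E_{33}\otimes I_r$, which is exactly what is unproved.

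In the paper's proof the steps are:
\begin{enumerate}
\item Positivity of $\rho(D)-I$ and $3I-\rho(D)$ puts the outer blocks in place through the Stinespring isometry.
\item The middle block follows from the norm equality for $KP_3=U+Z$, a sum of two scalar partial isometries with orthogonal ranges.
\item After that, the compressions $P_iKP_j$ are fixed scalar partial isometries, hence in $MD(\Psi)$.
\item Finally, $Q$ together with $V_{12}V_{23}V_{13}^{*}$ generates $M_r$ inside $MD(\Psi)$.
\end{enumerate}
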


\begin{proof}
Choose selfadjoint elements $a_{1},\ldots,a_{d-1}\in S$ such that
\[
S=span\left\{ 1_{A},a_{1},\ldots,a_{d-1}\right\} .
\]
When $d=1$ there are no $a_{j}$.

An $r\times r$ selfadjoint matrix can store $r^{2}$ selfadjoint
elements. Since 
\[
r^{2}\geq d-1,
\]
we arrange $a_{1},\ldots,a_{d-1}$ in a selfadjoint matrix 
\[
X=\left[x_{pq}\right]^{r}_{p,q=1}\in M_{r}\left(S\right)
\]
as follows: 

Put elements on the diagonal one at a time. Then, put two unused elements
$a_{j},a_{k}$ in an off-diagonal pair by setting 
\[
x_{pq}=a_{j}+ia_{k},\quad x_{qp}=a_{j}-ia_{k}.
\]
If there's only one left, then let 
\[
x_{pq}=x_{qp}=a_{j}.
\]
Do this until we have used all the $a_{j}$'s, then set all remaining
entries of $X$ equal to zero. If $d=1$, take $X=0$.

Note every $a_{j}$ is a scalar linear combination of the entries
of $X$, so 
\[
S=span\left\{ 1_{A},x_{pq}:1\leq p,q\leq r\right\} .
\]

Let $Q,V\in M_{r}$, where 
\[
Q=E_{11}=\begin{pmatrix}1 & 0 & \cdots & \cdots & 0\\
0 & 0 & \cdots & \cdots & 0\\
\vdots & \vdots & \vdots & \vdots & \vdots\\
\vdots & \vdots & \vdots & \vdots & \vdots\\
0 & 0 & \cdots & \cdots & 0
\end{pmatrix},\quad V=\begin{pmatrix}0 & 0 & \cdots & 0 & 1\\
1 & 0 & \cdots & 0 & 0\\
0 & 1 & \ddots & 0 & 0\\
\vdots &  & \ddots &  & \vdots\\
0 & 0 & \cdots & 1 & 0
\end{pmatrix}.
\]
For $r=1$, let $V=1$.

We shall use the elementary identity 
\[
C^{*}\left(Q,V\right)=M_{r}.
\]
Indeed, 
\[
V^{i-1}Q=E_{i1}.
\]
Taking adjoints gives $E_{1j}$, and so 
\[
E_{i1}E_{1j}=E_{ij}.
\]
Thus $Q$ and $V$ generate all the matrix units of $M_{r}$.

We also use the identification 
\[
M_{3r}\left(A\right)\simeq M_{3}\left(M_{r}\left(A\right)\right).
\]
Define 
\[
D=\begin{pmatrix}I_{r} & 0 & 0\\
0 & 2I_{r} & 0\\
0 & 0 & 3I_{r}
\end{pmatrix},\quad K=\begin{pmatrix}Q & I_{r} & I_{r}\\
I_{r} & X & V\\
I_{r} & V^{*} & 0
\end{pmatrix};
\]
both are selfadjoint. Set 
\[
R\left(S\right)=span\left\{ I_{3r},D,K\right\} .
\]
Since all entries of $D$ and $K$ belong to $S$, 
\[
R\left(S\right)\subset M_{3r}\left(S\right).
\]

The three generators are linearly independent, since 
\begin{gather*}
\alpha I+\beta D+\gamma K=\begin{pmatrix}\left(\alpha+\beta\right)I_{r}+\gamma Q & \gamma I_{r} & \gamma I_{r}\\
\gamma I_{r} & \left(\alpha+2\beta\right)I_{r}+\gamma X & \gamma V\\
\gamma I_{r} & \gamma V^{*} & \left(\alpha+3\beta\right)I_{r}
\end{pmatrix}=0\\
\Updownarrow\\
\alpha=\beta=\gamma=0.
\end{gather*}
Thus 
\[
\dim_{\mathbb{C}}R\left(S\right)=3.
\]

We will need a few properties of this construction. 
\end{proof}

\begin{lem}
\label{lem:2-2} $R\left(S\right)$ generates the full matrix algebra
over $A$, i.e., 
\begin{equation}
C^{*}\left(D,K\right)=M_{3r}\left(A\right).\label{eq:2-1}
\end{equation}
\end{lem}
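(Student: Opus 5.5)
The inclusion $C^{*}(D,K)\subset M_{3r}(A)$ is immediate, because $D,K\in M_{3r}(S)$. The work is the reverse inclusion. The plan is to show first that $C^{*}(D,K)$ contains all scalar matrix units $E_{ij}\otimes 1_{A}$ of $M_{3r}$. It then contains the diagonal block projections, and cutting $K$ down recovers $X$ entrywise. Finally, since $S=span\{1_{A},x_{pq}\}$ and $A=C^{*}(S)$, matrix units together with the entries $x_{pq}$ sitting in a corner generate $M_{3r}(A)$.

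\textbf{Step 1 (block projections).} $D$ is a scalar selfadjoint matrix with spectrum $\{1,2,3\}$. Hence its spectral projections
\[
P_{k}=\prod_{l\neq k}\frac{D-l}{k-l},\qquad k=1,2,3,
\]
are polynomials in $D$ and lie in $C^{*}(D,K)$. These are the three diagonal block projections $P_{k}=E_{kk}\otimes I_{r}$.

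\textbf{Step 2 (scalar matrix units).} Compress $K$ by the $P_{k}$. We have $P_{1}KP_{1}=Q$ in the $(1,1)$ block, and the off-diagonal compressions give $I_{r}$ in positions $(1,2)$, $(2,1)$, $(1,3)$, $(3,1)$, and $V$ in position $(2,3)$. Let $W_{12}=P_{1}KP_{2}$ and $W_{13}=P_{1}KP_{3}$. These are partial isometries identifying the blocks: $W_{12}$ has $I_{r}$ in position $(1,2)$, so $W_{12}W_{12}^{*}=P_{1}$. Conjugating, $W_{12}^{*}\,(P_{1}KP_{1})\,W_{12}$ and similar products move $Q$ into any block. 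Likewise $W_{12}\,(P_{2}KP_{3})\,W_{13}^{*}$ places $V$ in the $(1,1)$ block. Thus the $(1,1)$ corner of $C^{*}(D,K)$ contains $Q$ and $V$ (times $1_{A}$). By the identity $C^{*}(Q,V)=M_{r}$ proved above, the corner contains $M_{r}\otimes 1_{A}$. Combining with $W_{12}$ and $W_{13}$ gives all of $M_{3}\otimes M_{r}\otimes 1_{A}=M_{3r}\otimes 1_{A}$.

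\textbf{Step 3 (recovering $X$ and $A$).} $P_{2}KP_{2}$ is $X$ placed in the $(2,2)$ block. Multiplying by scalar matrix units $e_{i}$ and $e_{j}$ isolates each entry, giving $x_{pq}\otimes E_{11}$ in $C^{*}(D,K)$ for all $p,q$. Together with $1_{A}\otimes E_{11}$, the corner $E_{11}C^{*}(D,K)E_{11}$ then contains $S$, hence $C^{*}(S)=A$ placed at $E_{11}$. Translating with matrix units $E_{i1}$ and $E_{1j}$ gives $a\otimes E_{ij}$ for every $a\in A$, so $C^{*}(D,K)=M_{3r}(A)$. The case $r=1$ (with $V=1$, $Q=1$) is covered by the same argument.

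The main obstacle is Step 2. One must check that the compressions really produce unitaries between blocks, with products of off-diagonal pieces not contaminated by $X$. Using the $P_{k}$ from Step 1 to isolate individual blocks before multiplying removes this concern, since each block of $K$ is then handled separately.
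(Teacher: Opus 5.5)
Your proposal is correct and follows essentially the same route as the paper. Both arguments take the spectral projections of $D$, use the compressions $P_{1}KP_{2}$ and $P_{1}KP_{3}$ as block partial isometries, and place $V$ in the $(1,1)$ block via $W_{12}(P_{2}KP_{3})W_{13}^{*}$ so that, together with $Q$, it generates $M_{r}$; then $P_{2}KP_{2}$ is cut down by matrix units to put each $x_{pq}$, and hence all of $A$, in a single corner.
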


\begin{proof}
Let $\mathcal{C}=C^{*}\left(D,K\right)$. The three projections $P_{j}$
below are in $\mathcal{C}$, where 
\begin{align}
P_{1} & =\frac{1}{2}\left(D-2I\right)\left(D-3I\right)=\begin{pmatrix}I_{r} & 0 & 0\\
0 & 0 & 0\\
0 & 0 & 0
\end{pmatrix},\label{eq:2-2}\\
P_{2} & =-\left(D-I\right)\left(D-3I\right)=\begin{pmatrix}0 & 0 & 0\\
0 & I_{r} & 0\\
0 & 0 & 0
\end{pmatrix},\label{eq:2-3}\\
P_{3} & =\frac{1}{2}\left(D-I\right)\left(D-2I\right)=\begin{pmatrix}0 & 0 & 0\\
0 & 0 & 0\\
0 & 0 & I_{r}
\end{pmatrix}.\label{eq:2-4}
\end{align}

We have 
\[
P_{1}KP_{2}=\begin{pmatrix}0 & I_{r} & 0\\
0 & 0 & 0\\
0 & 0 & 0
\end{pmatrix},\quad P_{1}KP_{3}=\begin{pmatrix}0 & 0 & I_{r}\\
0 & 0 & 0\\
0 & 0 & 0
\end{pmatrix}.
\]
Taking adjoints gives 
\[
\begin{pmatrix}0 & 0 & 0\\
I_{r} & 0 & 0\\
0 & 0 & 0
\end{pmatrix},\quad\begin{pmatrix}0 & 0 & 0\\
0 & 0 & 0\\
I_{r} & 0 & 0
\end{pmatrix}.
\]
Also, 
\[
\begin{pmatrix}0 & 0 & 0\\
I_{r} & 0 & 0\\
0 & 0 & 0
\end{pmatrix}\begin{pmatrix}0 & 0 & I_{r}\\
0 & 0 & 0\\
0 & 0 & 0
\end{pmatrix}=\begin{pmatrix}0 & 0 & 0\\
0 & 0 & I_{r}\\
0 & 0 & 0
\end{pmatrix},
\]
and taking adjoints gives the remaining off-diagonal block. Thus $\mathcal{C}$
contains every $3\times3$ block matrix having $I_{r}$ in one block
and zero elsewhere.

Next, 
\[
P_{1}KP_{1}=\begin{pmatrix}Q & 0 & 0\\
0 & 0 & 0\\
0 & 0 & 0
\end{pmatrix},\quad P_{2}KP_{3}=\begin{pmatrix}0 & 0 & 0\\
0 & 0 & V\\
0 & 0 & 0
\end{pmatrix}.
\]
Hence 
\[
\begin{pmatrix}0 & I_{r} & 0\\
0 & 0 & 0\\
0 & 0 & 0
\end{pmatrix}\begin{pmatrix}0 & 0 & 0\\
0 & 0 & V\\
0 & 0 & 0
\end{pmatrix}\begin{pmatrix}0 & 0 & 0\\
0 & 0 & 0\\
I_{r} & 0 & 0
\end{pmatrix}=\begin{pmatrix}V & 0 & 0\\
0 & 0 & 0\\
0 & 0 & 0
\end{pmatrix}.
\]
Thus the upper-left block contains both $Q$ and $V$. Since 
\[
C^{*}\left(Q,V\right)=M_{r},
\]
we have 
\[
\begin{pmatrix}E_{pq} & 0 & 0\\
0 & 0 & 0\\
0 & 0 & 0
\end{pmatrix}\in\mathcal{C}
\]
for every $p,q$.

Using the block matrices already obtained, these matrix units can
be moved to any of the nine blocks. For example, 
\[
\begin{pmatrix}0 & 0 & 0\\
I_{r} & 0 & 0\\
0 & 0 & 0
\end{pmatrix}\begin{pmatrix}E_{pq} & 0 & 0\\
0 & 0 & 0\\
0 & 0 & 0
\end{pmatrix}\begin{pmatrix}0 & I_{r} & 0\\
0 & 0 & 0\\
0 & 0 & 0
\end{pmatrix}=\begin{pmatrix}0 & 0 & 0\\
0 & E_{pq} & 0\\
0 & 0 & 0
\end{pmatrix}.
\]
Thus $\mathcal{C}$ contains every scalar matrix unit of $M_{3r}$.

Finally, 
\[
P_{2}KP_{2}=\begin{pmatrix}0 & 0 & 0\\
0 & X & 0\\
0 & 0 & 0
\end{pmatrix}.
\]
Write 
\[
X=\begin{pmatrix}x_{11} & \cdots & x_{1r}\\
\vdots &  & \vdots\\
x_{r1} & \cdots & x_{rr}
\end{pmatrix}.
\]
Since all scalar matrix units have already been recovered, 
\[
\begin{pmatrix}0 & E_{1p} & 0\\
0 & 0 & 0\\
0 & 0 & 0
\end{pmatrix}\begin{pmatrix}0 & 0 & 0\\
0 & X & 0\\
0 & 0 & 0
\end{pmatrix}\begin{pmatrix}0 & 0 & 0\\
E_{q1} & 0 & 0\\
0 & 0 & 0
\end{pmatrix}=\begin{pmatrix}E_{1p}XE_{q1} & 0 & 0\\
0 & 0 & 0\\
0 & 0 & 0
\end{pmatrix}.
\]
But 
\[
E_{1p}XE_{q1}=\begin{pmatrix}x_{pq} & 0 & \cdots & 0\\
0 & 0 & \cdots & 0\\
\vdots & \vdots &  & \vdots\\
0 & 0 & \cdots & 0
\end{pmatrix}.
\]
Thus $\mathcal{C}$ contains a matrix having $x_{pq}$ in one scalar
position and zero everywhere else, for every $p,q$.

The same is true with $1_{A}$ in that position. Since 
\[
S=span\left\{ 1_{A},x_{pq}:1\leq p,q\leq r\right\} 
\]
and 
\[
A=C^{*}\left(S\right),
\]
that corner contains every element of $A$. The scalar matrix units
then move an arbitrary element of $A$ to any matrix position. Hence
\[
M_{3r}\left(A\right)\subset\mathcal{C}.
\]
The other direction $\mathcal{C}\subset M_{3r}\left(A\right)$ is
immediate, so \prettyref{eq:2-1} holds.
\end{proof}

\begin{lem}
\label{lem:2-3} Let $B$ be a unital $C^{*}$-algebra and let $\phi:S\longrightarrow B$
be UCP. Set 
\[
X_{\phi}=\phi^{\left(r\right)}\left(X\right)=\left[\phi\left(x_{pq}\right)\right]^{r}_{p,q=1},\quad\widehat{\phi}=\phi^{\left(3r\right)}|_{R\left(S\right)}
\]
and 
\[
K_{\phi}=\phi^{\left(3r\right)}\left(K\right)=\begin{pmatrix}Q & I_{r} & I_{r}\\
I_{r} & X_{\phi} & V\\
I_{r} & V^{*} & 0
\end{pmatrix}\in M_{3r}\left(B\right).
\]

If $\Psi:M_{3r}\left(A\right)\longrightarrow M_{3r}\left(B\right)$
is a UCP extension of $\widehat{\phi}$, i.e., 
\[
\Psi|_{R\left(S\right)}=\widehat{\phi},
\]
then 
\[
M_{3r}\otimes1_{A}\subset MD\left(\Psi\right)
\]
and 
\[
\Psi\left(z\otimes1_{A}\right)=z\otimes1_{B}
\]
for every $z\in M_{3r}$. 

In other words, any UCP extension of $\widehat{\phi}$ must fix the
scalar matrix algebra and behave multiplicatively with respect to
it.
\end{lem}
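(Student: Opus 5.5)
The plan is to reduce everything to the multiplicative domain of $\Psi$. Recall Choi's theorem: $MD(\Psi)$ is a $C^{*}$-subalgebra, and a selfadjoint $h$ lies in it as soon as $\Psi(h^{2})=\Psi(h)^{2}$. Once the scalar matrix units $E_{ij}\otimes1_{A}$ are in $MD(\Psi)$ and are mapped to $E_{ij}\otimes1_{B}$, the conclusion follows by linearity. To get there, I would repeat the generation argument of \prettyref{lem:2-2} inside $MD(\Psi)$. Put $D$, the projections $P_{1},P_{2},P_{3}$ and the scalar parts of $K$ into $MD(\Psi)$, then rebuild every scalar matrix unit of $M_{3r}$ using only the $C^{*}$-algebra operations already used in that proof.

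\emph{Step 1: the diagonal.} Put $Q_{j}=\Psi(P_{j})\ge0$, where $P_{j}$ are the spectral projections of $D$ from \prettyref{eq:2-2}--\prettyref{eq:2-4}. Since $\Psi$ fixes $I$ and $D$, we have $Q_{1}+Q_{2}+Q_{3}=I$ and $Q_{1}+2Q_{2}+3Q_{3}=D\otimes1_{B}$.

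From $D-I=Q_{2}+2Q_{3}$, which vanishes on the first block, positivity forces $Q_{2}$ and $Q_{3}$ to be supported off block $1$. Symmetrically, $3I-D=2Q_{1}+Q_{2}$ forces $Q_{1}$ and $Q_{2}$ to be supported off block $3$. A short bookkeeping then gives
\[
Q_{1}=P_{1}+a,\quad Q_{3}=P_{3}+a,\quad Q_{2}=P_{2}-2a,
\]
with $0\le a\le\tfrac12P_{2}$ supported in block $2$. Equivalently, $\Psi(D^{2})=D^{2}+2a$. So $D\in MD(\Psi)$, and with it every $P_{j}$ since they are polynomials in $D$, exactly when $a=0$.

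\emph{Step 2 (main obstacle): showing $a=0$.} Here $K$ must be used, since $D$ alone allows the middle block to ``split'' between eigenvalues $1$ and $3$. I would try the Schwarz inequality for the block entries of $K$. The $(1,2)$ and $(1,3)$ entries are $I_{r}$, and these are maximal in norm given the constraints coming from $\Psi(P_{j})$.

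Concretely, I would consider the positive $2\times2$ operator matrices
\[
\begin{pmatrix}\Psi(P_{i}) & \Psi(P_{i}KP_{j})\\ \Psi(P_{j}KP_{i}) & \Psi(P_{j}KP_{j})\end{pmatrix}\ge0 .
\]
They come from the positivity of $\Psi^{(2)}$ on $\begin{pmatrix}P_{i}\\ P_{j}K\end{pmatrix}\begin{pmatrix}P_{i}\\ P_{j}K\end{pmatrix}^{*}$-type elements. I would combine them with the identity $\sum_{i,j}\Psi(P_{i}KP_{j})=K_{\phi}$, and with the fact that the $(1,1)$ block of $K_{\phi}^{2}$ equals $Q+2I_{r}$. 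The hope is that the blocks $I_{r}$ coupling block $1$ to blocks $2$ and $3$ saturate these inequalities, and that any nonzero $a$, which moves mass of block $2$ into the eigenspaces of blocks $1$ and $3$, contradicts that saturation. If this fails, the fallback is a Stinespring dilation $\Psi=W^{*}\pi(\cdot)W$ with $\pi(P_{j})$ projections. There I would show that $W$ must map block $2$ into $\operatorname{ran}\pi(P_{2})$, by comparing $\|P_{1}\pi(K)P_{2}W\xi\|$ with the unit-norm scalar couplings.

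\emph{Step 3: the scalar matrix units.} With $P_{j}\in MD(\Psi)$, the bimodule property gives $\Psi(P_{i}KP_{j})=P_{i}K_{\phi}P_{j}$. In particular $\Psi$ fixes the block units $P_{1}KP_{2}$ and $P_{1}KP_{3}$, and also $P_{1}KP_{1}=Q\oplus0\oplus0$ and $P_{2}KP_{3}$, the block carrying $V$. I would then show these scalar elements lie in $MD(\Psi)$. For the partial isometry $u=P_{1}KP_{2}$, Schwarz gives $\Psi(u^{*}u)\ge\Psi(u)^{*}\Psi(u)=P_{2}$; moreover $u^{*}u=P_{2}$, so $\Psi(u^{*}u)=\Psi(P_{2})=P_{2}$ because $P_{2}\in MD(\Psi)$. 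Equality holds, hence $u\in MD(\Psi)$. The same argument handles the other block units, $Q$, and the block containing $V$.

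Since $MD(\Psi)$ is a $C^{*}$-algebra, the computation of \prettyref{lem:2-2}, run with these scalar generators only, yields every $E_{ij}\otimes1_{A}\in MD(\Psi)$. Because $\Psi$ is a $*$-homomorphism on $MD(\Psi)$, it sends each $E_{ij}\otimes1_{A}$ to the corresponding product of the images of the generators, which is $E_{ij}\otimes1_{B}$. This finishes the proof.
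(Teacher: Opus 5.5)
Your Step~1 is correct and useful. The description $\Psi(P_1)=P_1+a$, $\Psi(P_3)=P_3+a$, $\Psi(P_2)=P_2-2a$ with $0\le a\le\frac12P_2$ is exactly the defect that has to be excluded. Your Step~3 is essentially the paper's part~2; for two-sided membership in $MD(\Psi)$ you should also record $\Psi(uu^{*})=P_1=\Psi(u)\Psi(u)^{*}$, which is immediate.

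However, Step~2 carries the whole content of the lemma, and it is not proved: you offer a hope and a fallback. The inequality you display is also false as written. The factorization you indicate puts $\Psi(P_jK^{2}P_j)$ in the lower corner, not $\Psi(P_jKP_j)$. For $i=1$, $j=3$ your matrix has lower corner $\Psi(P_3KP_3)=0$ and off-diagonal entry $\Psi(P_1KP_3)$. That entry equals $E_{13}\otimes I_r\neq0$ for every extension of the form $id_{M_{3r}}\otimes\psi$, so the matrix cannot be positive. Your fallback compares norms starting from $\xi$ in block~$2$, which is the one block where nothing is yet known about $W\xi$. The information available there, such as $\|\rho(P_2)W\xi\|^{2}=\langle\xi,(I-2a)\xi\rangle$, is consistent with $a\neq0$ and gives no contradiction by itself.

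The missing idea is to transport the localization you already have on an extreme block through a norm-saturated column of $K$. Write $\Psi=W^{*}\rho(\cdot)W$. Your Step~1 gives $P_3\Psi(P_3)P_3=P_3$, hence $W(P_3H)\subset\rho(P_3)L$, and likewise $W(P_1H)\subset\rho(P_1)L$. For $\eta\in P_3H$ we have $\rho(K)W\eta=\rho(KP_3)W\eta$. Moreover $KP_3=U+Z$, where $U=E_{13}\otimes I_r$ and $Z=E_{23}\otimes V$ are scalar partial isometries with initial space block~$3$ and orthogonal ranges. Therefore
$\|\rho(K)W\eta\|^{2}=2\|\eta\|^{2}=\|K_\phi\eta\|^{2}=\|W^{*}\rho(K)W\eta\|^{2}$.
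This forces $\rho(K)W\eta\in W(H)$, i.e.\ $\rho(K)W\eta=WK_\phi\eta$. Compressing by $\rho(P_2)$ gives $\rho(Z)W\eta=\rho(P_2)WZ\eta$. Taking norms, in your notation,
$\|\eta\|^{2}=\langle Z\eta,\Psi(P_2)Z\eta\rangle=\|Z\eta\|^{2}-2\langle Z\eta,aZ\eta\rangle$.
Since $Z$ maps $P_3H$ onto $P_2H$, this gives $a=0$. This is how the paper closes the step.

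The same saturation argument also works from block~$1$ with the column $KP_1$, which is closer to your instinct about the $I_r$ couplings. For $\eta\in P_1H$, both $\|\rho(K)W\eta\|^{2}$ and $\|K_\phi\eta\|^{2}$ equal $\langle\eta,K_\phi\eta\rangle+2\|\eta\|^{2}$. The point is that the argument must start from block $1$ or $3$, where $W$ is already localized, and push into block~$2$, not the other way around.
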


\begin{proof}
We use the same notation for a scalar matrix over $A$ and the corresponding
scalar matrix over $B$. Since 
\[
\widehat{\phi}\left(D\right)=D,\quad\widehat{\phi}\left(K\right)=K_{\phi},
\]
we have 
\[
\Psi\left(D\right)=D,\quad\Psi\left(K\right)=K_{\phi}.
\]

Choose a faithful representation of $B$ on a Hilbert space $H_{0}$,
and identify $B$ with its image. We then regard $M_{3r}\left(B\right)$
as acting on 
\[
H=H^{r}_{0}\oplus H^{r}_{0}\oplus H^{r}_{0}.
\]
By Stinespring's theorem, there exist a Hilbert space $L$, a unital
$*$-representation 
\[
\rho:M_{3r}\left(A\right)\longrightarrow B\left(L\right),
\]
and an isometry $W:H\longrightarrow L$ such that 
\[
\Psi\left(y\right)=W^{*}\rho\left(y\right)W,\quad y\in M_{3r}\left(A\right).
\]

Let $P_{1},P_{2},P_{3}$ be the projections from \prettyref{eq:2-2}--\prettyref{eq:2-4},
i.e., 
\[
P_{1}=E_{11}\otimes I_{r},\quad P_{2}=E_{22}\otimes I_{r},\quad P_{3}=E_{33}\otimes I_{r}.
\]
We use the same notation for the corresponding projections on $H$.

\textbf{1.} We first show that, for $j=1,2,3$, 
\begin{equation}
\rho\left(P_{j}\right)W=WP_{j},\qquad\vcenter{\xymatrix{H\ar[r]^{W}\ar[d]_{P_{j}} & L\ar[d]^{\rho\left(P_{j}\right)}\\
H\ar[r]_{W} & L
}
}\label{eq:2-5}
\end{equation}

Let $\eta\in P_{1}H$. Then $D\eta=\eta$, and since $\Psi\left(D\right)=D$,
\[
\begin{aligned}0 & =\left\langle \eta,\left(\Psi\left(D\right)-I\right)\eta\right\rangle =\left\langle W\eta,\left(\rho\left(D\right)-I\right)W\eta\right\rangle .\end{aligned}
\]
Now 
\[
\rho\left(D\right)-I=\rho\left(D-I\right)\geq0\Longrightarrow\left(\rho\left(D\right)-I\right)W\eta=0,
\]
so 
\[
\rho\left(D\right)W\eta=W\eta.
\]
Thus $W\eta$ belongs to the $1$-eigenspace of $\rho\left(D\right)$.
Since 
\[
\rho\left(D\right)=\rho\left(P_{1}\right)+2\rho\left(P_{2}\right)+3\rho\left(P_{3}\right),
\]
this eigenspace is $\rho\left(P_{1}\right)L$. Therefore 
\[
W\left(P_{1}H\right)\subset\rho\left(P_{1}\right)L.
\]
The same argument with $3I-\rho\left(D\right)$ gives 
\[
W\left(P_{3}H\right)\subset\rho\left(P_{3}\right)L.
\]

For $P_{2}$ we need $K$. Write 
\[
KP_{3}=K_{\phi}P_{3}=\begin{pmatrix}0 & 0 & I_{r}\\
0 & 0 & V\\
0 & 0 & 0
\end{pmatrix}=\underset{U}{\underbrace{\begin{pmatrix}0 & 0 & I_{r}\\
0 & 0 & 0\\
0 & 0 & 0
\end{pmatrix}}}+\underset{Z}{\underbrace{\begin{pmatrix}0 & 0 & 0\\
0 & 0 & V\\
0 & 0 & 0
\end{pmatrix}}}
\]
where 
\begin{equation}
U=E_{13}\otimes I_{r},\quad Z=E_{23}\otimes V.\label{eq:b-6}
\end{equation}
Both $U$ and $Z$ are partial isometries, with 
\begin{align*}
U^{*}U & =P_{3},\quad UU^{*}=P_{1}\\
Z^{*}Z & =P_{3},\quad ZZ^{*}=P_{2}
\end{align*}

Let $\eta\in P_{3}H$. Since 
\[
W\eta\in\rho\left(P_{3}\right)L,
\]
we have 
\begin{align}
\rho\left(K\right)W\eta & =\rho\left(K\right)\rho\left(P_{3}\right)W\eta\nonumber \\
 & =\rho\left(KP_{3}\right)W\eta=\rho\left(U\right)W\eta+\rho\left(Z\right)W\eta.\label{eq:2-6}
\end{align}
The two terms are orthogonal and each has norm $\left\Vert \eta\right\Vert $,
so 
\[
\left\Vert \rho\left(K\right)W\eta\right\Vert ^{2}=2\left\Vert \eta\right\Vert ^{2}.
\]
Similarly, 
\begin{equation}
K_{\phi}\eta=U\eta+Z\eta,\label{eq:2-7}
\end{equation}
and 
\[
\left\Vert K_{\phi}\eta\right\Vert ^{2}=2\left\Vert \eta\right\Vert ^{2}.
\]

Using 
\[
W^{*}\rho\left(K\right)W\eta=\Psi\left(K\right)\eta=K_{\phi}\eta
\]
it follows that 
\[
\left\Vert W^{*}\rho\left(K\right)W\eta\right\Vert =\left\Vert \rho\left(K\right)W\eta\right\Vert .
\]
Since $W$ is an isometry, $WW^{*}$ is the projection onto $W(H)$,
so we get 
\[
\rho\left(K\right)W\eta\in W\left(H\right).
\]
Therefore, 
\[
\rho\left(K\right)W\eta=W\left(W^{*}\rho\left(K\right)W\right)\eta=W\Psi\left(K\right)\eta=WK_{\phi}\eta.
\]
That is, 
\begin{equation}
\rho\left(K\right)W\eta=WK_{\phi}\eta.\label{eq:2-8}
\end{equation}

Combining \prettyref{eq:2-6}, \prettyref{eq:2-7} and \prettyref{eq:2-8},
we get 
\begin{equation}
\rho\left(U\right)W\eta+\rho\left(Z\right)W\eta=WU\eta+WZ\eta.\label{eq:2-9}
\end{equation}

Apply $\rho\left(P_{2}\right)$ to both sides of \prettyref{eq:2-9}.
Using \prettyref{eq:b-6}, 
\[
\rho\left(P_{2}\right)\left(\rho\left(U\right)W\eta+\rho\left(Z\right)W\eta\right)=\rho\left(Z\right)W\eta.
\]
On the other hand, since $U\eta\in P_{1}H$ and 
\[
W\left(P_{1}H\right)\subset\rho\left(P_{1}\right)L,
\]
we get 
\[
\rho\left(P_{2}\right)\left(WU\eta+WZ\eta\right)=\underset{=0}{\underbrace{\rho\left(P_{2}\right)WU\eta}}+\rho\left(P_{2}\right)WZ\eta.
\]
This gives 
\[
\rho\left(Z\right)W\eta=\rho\left(P_{2}\right)WZ\eta.
\]

Note that 
\begin{align*}
\left\Vert \rho\left(Z\right)W\eta\right\Vert ^{2} & =\left\langle \eta,W^{*}\rho\left(Z^{*}Z\right)W\eta\right\rangle \\
 & =\left\langle \eta,W^{*}\rho\left(P_{3}\right)W\eta\right\rangle =\left\langle \eta,W^{*}W\eta\right\rangle =\left\Vert \eta\right\Vert ^{2}
\end{align*}
and 
\[
\left\Vert WZ\eta\right\Vert =\left\Vert Z\eta\right\Vert =\left\Vert \eta\right\Vert .
\]
Therefore, 
\[
\left\Vert \rho\left(P_{2}\right)WZ\eta\right\Vert =\left\Vert WZ\eta\right\Vert .
\]

Since $\rho\left(P_{2}\right)$ is a projection, it follows that
\[
WZ\eta\in\rho\left(P_{2}\right)L.
\]
The operator $Z$ maps $P_{3}H$ onto $P_{2}H$. Therefore 
\[
W\left(P_{2}H\right)\subset\rho\left(P_{2}\right)L.
\]

We have shown 
\[
W\left(P_{j}H\right)\subset\rho\left(P_{j}\right)L,\quad j=1,2,3.
\]
Now $\sum P_{j}=I$ gives 
\[
\rho\left(P_{j}\right)W=\rho\left(P_{j}\right)\left(\sum WP_{k}\right)=WP_{j},
\]
which is \prettyref{eq:2-5}. 

Consequently, 
\[
\Psi\left(P_{j}\right)=W^{*}\rho\left(P_{j}\right)W=W^{*}WP_{j}=P_{j}.
\]
Since $P_{j}$ and $\Psi\left(P_{j}\right)$ are projections, 
\[
P_{j}\in MD\left(\Psi\right),\quad j=1,2,3.
\]

\textbf{2.} Consider now 
\[
V_{12}=\begin{pmatrix}0 & I_{r} & 0\\
0 & 0 & 0\\
0 & 0 & 0
\end{pmatrix},\quad V_{13}=\begin{pmatrix}0 & 0 & I_{r}\\
0 & 0 & 0\\
0 & 0 & 0
\end{pmatrix},\quad V_{23}=\begin{pmatrix}0 & 0 & 0\\
0 & 0 & V\\
0 & 0 & 0
\end{pmatrix}.
\]
Since all $P_{j}\in MD\left(\Psi\right)$ and $\Psi\left(P_{j}\right)=P_{j}$,
\[
\Psi\left(V_{12}\right)=\Psi\left(P_{1}KP_{2}\right)=P_{1}\Psi\left(K\right)P_{2}=V_{12}.
\]
Similarly, 
\[
\Psi\left(V_{13}\right)=V_{13},\quad\Psi\left(V_{23}\right)=V_{23}.
\]

Each $V_{ij}$ is a partial isometry satisfying 
\[
V^{*}_{ij}V_{ij}=P_{j},\quad V_{ij}V^{*}_{ij}=P_{i}.
\]
Since $\Psi(V_{ij})=V_{ij}$, 
\[
\Psi\left(V^{*}_{ij}V_{ij}\right)=\Psi\left(P_{j}\right)=P_{j}=V^{*}_{ij}V_{ij}=\Psi\left(V_{ij}\right)^{*}\Psi\left(V_{ij}\right),
\]
and similarly, 
\[
\Psi\left(V_{ij}V^{*}_{ij}\right)=\Psi\left(V_{ij}\right)\Psi\left(V_{ij}\right)^{*}.
\]
Hence 
\[
V_{12},V_{13},V_{23}\in MD\left(\Psi\right).
\]
It follows that
\[
E_{ij}\otimes I_{r}\in MD\left(\Psi\right),\quad\Psi\left(E_{ij}\otimes I_{r}\right)=E_{ij}\otimes I_{r},\quad1\leq i,j\leq3.
\]

Now put 
\[
Q_{1}=P_{1}KP_{1}=\begin{pmatrix}Q & 0 & 0\\
0 & 0 & 0\\
0 & 0 & 0
\end{pmatrix}.
\]
Since $P_{1}\in MD\left(\Psi\right)$, 
\[
\Psi\left(Q_{1}\right)=\Psi\left(P_{1}KP_{1}\right)=P_{1}\Psi\left(K\right)P_{1}=Q_{1}.
\]
Both $Q_{1}$ and $\Psi\left(Q_{1}\right)$ are projections, so 
\[
Q_{1}\in MD\left(\Psi\right).
\]

Since $MD\left(\Psi\right)$ is a $C^{*}$-algebra, 
\[
V_{12}V_{23}V^{*}_{13}=\begin{pmatrix}V & 0 & 0\\
0 & 0 & 0\\
0 & 0 & 0
\end{pmatrix}\in MD\left(\Psi\right).
\]
Thus the upper-left block contains both $Q$ and $V$ inside $MD\left(\Psi\right)$.
Since 
\[
C^{*}\left(Q,V\right)=M_{r},
\]
all scalar $r\times r$ matrices in that block belong to $MD\left(\Psi\right)$,
i.e., 
\[
E_{11}\otimes a\in MD(\Psi),\quad a\in M_{r}.
\]

Since $E_{ij}\otimes I_{r}\in MD(\Psi)$, we have 
\[
(E_{i1}\otimes I_{r})(E_{11}\otimes a)(E_{1j}\otimes I_{r})=E_{ij}\otimes a\in MD(\Psi)
\]
for every $a\in M_{r}$. Hence 
\[
M_{3r}\otimes1_{A}\subset MD(\Psi).
\]

Moreover, $\Psi$ fixes the generators of this scalar algebra. Its
restriction to the multiplicative domain is multiplicative, so 
\[
\Psi\left(z\otimes1_{A}\right)=z\otimes1_{B}
\]
for every $z\in M_{3r}$.
\end{proof}

\begin{lem}
\label{lem:2-4} Let $B$ be a unital $C^{*}$-algebra and let 
\[
\Psi:M_{3r}\left(A\right)\longrightarrow M_{3r}\left(B\right)
\]
be UCP. Suppose 
\[
M_{3r}\otimes1_{A}\subset MD\left(\Psi\right)
\]
and 
\[
\Psi\left(z\otimes1_{A}\right)=z\otimes1_{B}
\]
for every $z\in M_{3r}$. Then there is a unique UCP map 
\[
\psi:A\longrightarrow B
\]
such that 
\[
\Psi=id_{M_{3r}}\otimes\psi.
\]
\end{lem}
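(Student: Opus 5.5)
Write $n=3r$ and let $e_{ij}$ denote the scalar matrix units of $M_{n}$. We will define $\psi$ by compressing $\Psi$ to a single corner and then use the bimodule property of the multiplicative domain to recover every entry. Concretely, for $a\in A$ put
\[
\psi(a)=\text{the }(1,1)\text{ entry of }\Psi(e_{11}\otimes a),
\]
that is, $\Psi(e_{11}\otimes a)=(e_{11}\otimes1_{B})\Psi(e_{11}\otimes a)(e_{11}\otimes1_{B})$ identified with an element of $B$. The displayed identity holds by the multiplicative domain property: since $e_{11}\otimes1_{A}\in MD(\Psi)$ and $\Psi(e_{11}\otimes1_{A})=e_{11}\otimes1_{B}$, we get $\Psi(e_{11}\otimes a)=\Psi\bigl((e_{11}\otimes1)(e_{11}\otimes a)(e_{11}\otimes1)\bigr)=(e_{11}\otimes1)\Psi(e_{11}\otimes a)(e_{11}\otimes1)$. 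Hence $\psi$ is well defined and linear.

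Next we show $\Psi=id_{M_{n}}\otimes\psi$. An arbitrary $y=[a_{ij}]\in M_{n}(A)$ decomposes as $y=\sum_{i,j}e_{ij}\otimes a_{ij}$, and
\[
e_{ij}\otimes a=(e_{i1}\otimes1_{A})(e_{11}\otimes a)(e_{1j}\otimes1_{A}).
\]
Applying Choi's bimodule property for elements of $MD(\Psi)$ (valid because $e_{i1}\otimes1_{A}$ and $e_{1j}\otimes1_{A}$ lie in $MD(\Psi)$), we obtain
\[
\Psi(e_{ij}\otimes a)=(e_{i1}\otimes1_{B})\,(e_{11}\otimes\psi(a))\,(e_{1j}\otimes1_{B})=e_{ij}\otimes\psi(a).
\]
Summing over $i,j$ gives $\Psi(y)=[\psi(a_{ij})]$, which is exactly $\Psi=id_{M_{n}}\otimes\psi$.

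The remaining properties follow quickly. For positivity and complete positivity, the map $a\mapsto e_{11}\otimes a$ is a $*$-homomorphism $A\to M_{n}(A)$, and compression to the $(1,1)$ corner is completely positive, so $\psi$, being their composite with $\Psi$ in between, is CP. For unitality, $\psi(1_{A})$ is the corner of $\Psi(e_{11}\otimes1_{A})=e_{11}\otimes1_{B}$, so $\psi(1_{A})=1_{B}$. Uniqueness holds because any $\psi'$ with $\Psi=id\otimes\psi'$ must satisfy $e_{11}\otimes\psi'(a)=\Psi(e_{11}\otimes a)=e_{11}\otimes\psi(a)$, forcing $\psi'=\psi$.

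No step is a real obstacle. The only point needing care is invoking the bimodule identity $\Psi(xyz)=\Psi(x)\Psi(y)\Psi(z)$ for $x,z\in MD(\Psi)$; this is standard (Choi) and applies because $MD(\Psi)$ contains the scalar matrix units tensored with $1_{A}$, as assumed.
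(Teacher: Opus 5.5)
Your proposal is correct and follows essentially the same route as the paper. You define $\psi$ from the $(1,1)$ corner via the multiplicative-domain bimodule property, recover each entry through $e_{ij}\otimes a=(e_{i1}\otimes1_{A})(e_{11}\otimes a)(e_{1j}\otimes1_{A})$, and get complete positivity, unitality and uniqueness from that corner, exactly as the paper does.
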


\begin{proof}
For $a\in A$, since $E_{11}\otimes1_{A}\in MD(\Psi)$, 
\[
\begin{aligned}\Psi\left(E_{11}\otimes a\right) & =\Psi\left(\left(E_{11}\otimes1_{A}\right)\left(E_{11}\otimes a\right)\left(E_{11}\otimes1_{A}\right)\right)\\
 & =\left(E_{11}\otimes1_{B}\right)\Psi\left(E_{11}\otimes a\right)\left(E_{11}\otimes1_{B}\right)=\begin{pmatrix}* & 0 & \cdots\\
0 & 0 & \cdots\\
\vdots & \vdots
\end{pmatrix}.
\end{aligned}
\]
Thus there is a unique linear map 
\[
\psi:A\longrightarrow B
\]
such that 
\[
\Psi\left(E_{11}\otimes a\right)=E_{11}\otimes\psi\left(a\right).
\]
Since $\Psi\left(E_{11}\otimes1_{A}\right)=E_{11}\otimes1_{B}$, the
map $\psi$ is unital.

For $1\leq i,j\leq3r$, 
\[
E_{ij}\otimes a=\left(E_{i1}\otimes1_{A}\right)\left(E_{11}\otimes a\right)\left(E_{1j}\otimes1_{A}\right).
\]
A similar MD argument gives 
\[
\Psi\left(E_{ij}\otimes a\right)=E_{ij}\otimes\psi\left(a\right).
\]
Hence $\Psi([a_{ij}])=[\psi(a_{ij})]$, i.e., 
\[
\Psi=id_{M_{3r}}\otimes\psi.
\]

Since $\Psi$ is completely positive, for every $k$ and every $\left[a_{ij}\right]\geq0$
in $M_{k}\left(A\right)$, 
\[
\left[\Psi\left(E_{11}\otimes a_{ij}\right)\right]\geq0.
\]
Thus 
\[
\left[E_{11}\otimes\psi\left(a_{ij}\right)\right]\geq0,
\]
and therefore 
\[
\left[\psi\left(a_{ij}\right)\right]\geq0.
\]
Hence $\psi$ is completely positive. The $11$ corner gives uniqueness. 
\end{proof}

\begin{proof}[Proof of \prettyref{thm:2-1}, continued]
By \prettyref{lem:2-2}, 
\[
C^{*}\left(R\left(S\right)\right)=M_{3r}\left(A\right).
\]
Now let $B$ be a unital $C^{*}$-algebra and let 
\[
\phi:S\longrightarrow B
\]
be UCP. Use the notation $X_{\phi}$, $K_{\phi}$, and $\widehat{\phi}$
from \prettyref{lem:2-3}. Then $\widehat{\phi}$ is UCP.

Let 
\[
\Psi:M_{3r}\left(A\right)\longrightarrow M_{3r}\left(B\right)
\]
be a UCP extension of $\widehat{\phi}$. By \prettyref{lem:2-3},
\[
M_{3r}\otimes1_{A}\subset MD\left(\Psi\right),\quad\Psi\left(z\otimes1_{A}\right)=z\otimes1_{B},\:\forall z\in M_{3r}.
\]
Then \prettyref{lem:2-4} gives a unique UCP map 
\[
\psi:A\longrightarrow B,\quad\Psi=id_{M_{3r}}\otimes\psi.
\]

Since $\Psi$ extends $\widehat{\phi}$, we have $\Psi\left(K\right)=K_{\phi}$,
with 
\[
\Psi\left(K\right)=\begin{pmatrix}Q & I_{r} & I_{r}\\
I_{r} & \psi^{(r)}(X) & V\\
I_{r} & V^{*} & 0
\end{pmatrix},\quad K_{\phi}=\begin{pmatrix}Q & I_{r} & I_{r}\\
I_{r} & \phi^{(r)}\left(X\right) & V\\
I_{r} & V^{*} & 0
\end{pmatrix}.
\]
Thus 
\[
\psi^{(r)}\left(X\right)=\phi^{(r)}\left(X\right)
\]
and so 
\[
\psi\left(x_{pq}\right)=\phi\left(x_{pq}\right)
\]
for every $p,q$. 

Since both maps are unital and 
\[
S=span\left\{ 1_{A},x_{pq}:1\leq p,q\leq r\right\} ,
\]
we have 
\[
\psi|_{S}=\phi.
\]
So every UCP extension of $\widehat{\phi}$ has the form $id_{M_{3r}}\otimes\psi$
for a unique UCP extension $\psi$ of $\phi$.

Conversely, if $\psi:A\longrightarrow B$ is a UCP extension of $\phi$,
then 
\[
id_{M_{3r}}\otimes\psi
\]
is UCP and its restriction to $R\left(S\right)$ is $\widehat{\phi}$.

Hence 
\[
\psi\longmapsto id_{M_{3r}}\otimes\psi
\]
is a bijection between the UCP extensions of $\phi$ and the UCP extensions
of $\widehat{\phi}$. It is affine because amplification preserves
convex combinations.

Finally, let 
\[
T=D+iK.
\]
Since $D=D^{*}$ and $K=K^{*}$, 
\[
D=\frac{T+T^{*}}{2},\quad K=\frac{T-T^{*}}{2i}.
\]
Therefore, 
\[
R\left(S\right)=span\left\{ I,T,T^{*}\right\} 
\]
and 
\[
C^{*}\left(T\right)=C^{*}\left(D,K\right)=M_{3r}\left(A\right).
\]
\end{proof}

\section{Some consequences}
\begin{cor}
\label{cor:3-1} Let $H$ be a Hilbert space and let $\phi:S\longrightarrow B\left(H\right)$
be UCP. Then $\phi$ has the unique extension property if and only
if $\widehat{\phi}$ has the unique extension property. 
\end{cor}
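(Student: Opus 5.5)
The plan is to apply Theorem~\ref{thm:2-1} with $B=B(H)$ and then translate the statement from extension sets into the language of the unique extension property. In this setting, $\phi$ has the unique extension property when $\mathcal{E}_{A}(\phi,B(H))$ consists of a single map. Likewise, $\widehat{\phi}$ has it when $\mathcal{E}_{M_{3r}(A)}(\widehat{\phi},M_{3r}(B(H)))$ is a singleton. Here we identify $M_{3r}(B(H))=B(H^{3r})$, so $\widehat{\phi}$ is a UCP map into $B(H^{3r})$ and its extension set is the usual one.

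The theorem provides the affine bijection $\psi\mapsto id_{M_{3r}}\otimes\psi$ between these two sets. A bijection preserves cardinality, so one set is a singleton exactly when the other is. Both sets are nonempty by Arveson's extension theorem, although the bijection alone already gives the equivalence.

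If the paper's convention for the unique extension property additionally requires the unique extension to be a $*$-representation, one more step is needed. We must check that $id_{M_{3r}}\otimes\psi$ is a $*$-homomorphism if and only if $\psi$ is. The forward direction is immediate. For the converse, I would compress to the $(1,1)$ corner: $\psi(a)=$ the $11$ entry of $\Psi(E_{11}\otimes a)$, and $E_{11}\otimes\cdot$ is multiplicative. So multiplicativity of $\Psi$ on $E_{11}\otimes A$ gives multiplicativity of $\psi$.

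I do not expect a real obstacle. The only point needing care is the identification $M_{3r}(B(H))\cong B(H^{3r})$, and confirming that the extension set in Theorem~\ref{thm:2-1} is exactly the one appearing in the unique extension property for $\widehat{\phi}$.
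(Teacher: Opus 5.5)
Your proposal is correct and follows the paper's proof. The bijection of Theorem~\ref{thm:2-1} transfers uniqueness, and the $(1,1)$-corner identity $(E_{11}\otimes a)(E_{11}\otimes b)=E_{11}\otimes ab$ transfers multiplicativity from $id_{M_{3r}}\otimes\psi$ back to $\psi$. The paper does use Arveson's convention, under which the unique extension must also be a representation, so your ``one more step'' is required rather than optional.
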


\begin{proof}
By \prettyref{thm:2-1}, the two extension sets are in affine bijection.
Suppose first that $\phi$ has the unique extension property, and
let $\psi:A\longrightarrow B\left(H\right)$ be its unique extension.
Then $\psi$ is a representation, and 
\[
id_{M_{3r}}\otimes\psi
\]
is the unique extension of $\widehat{\phi}$ and is again a representation.

Conversely, suppose that $\widehat{\phi}$ has the unique extension
property. By \prettyref{thm:2-1}, its unique extension has the form
\[
id_{M_{3r}}\otimes\psi
\]
for a unique $\psi\in\mathcal{E}_{A}\left(\phi,B\left(H\right)\right)$.
Since this amplification is a representation, comparison with the
$11$ corner gives 
\[
\psi\left(ab\right)=\psi\left(a\right)\psi\left(b\right),\quad a,b\in A.
\]
Thus $\psi$ is a representation, and $\phi$ has the unique extension
property. 
\end{proof}

\begin{cor}
\label{cor:3-2} Let $\pi:A\longrightarrow B\left(H\right)$ be an
irreducible representation. Then $\pi$ is a boundary representation
for $S$ if and only if $id_{M_{3r}}\otimes\pi$ is a boundary representation
for $R(S)$. Moreover, $S$ is hyperrigid in $A$ if and only if $R\left(S\right)$
is hyperrigid in $M_{3r}\left(A\right)$. 
\end{cor}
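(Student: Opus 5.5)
The boundary statement follows from \prettyref{cor:3-1}, provided irreducibility transfers through amplification. I will then handle hyperrigidity using Arveson's formulation in terms of the unique extension property, or more directly through the definition via sequences of representations.

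\textbf{Boundary representations.} Let $\pi:A\to B(H)$ be irreducible and set $\phi=\pi|_{S}$. Then $\widehat{\phi}=(id_{M_{3r}}\otimes\pi)|_{R(S)}$, and $id_{M_{3r}}\otimes\pi$ is an irreducible representation of $M_{3r}(A)$ on $H^{3r}$, since its commutant is $M_{3r}(\pi(A)')=M_{3r}\otimes\mathbb{C}=\mathbb{C}$ after using $\pi(A)'=\mathbb{C}$. Conversely, every irreducible representation of $M_{3r}(A)$ is unitarily equivalent to $id_{M_{3r}}\otimes\sigma$ for an irreducible $\sigma$ of $A$.

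By definition, $\pi$ is a boundary representation for $S$ exactly when $\pi|_{S}$ has the unique extension property, namely when $\pi$ is the only UCP extension. By \prettyref{thm:2-1} the extension sets of $\pi|_{S}$ and of $\widehat{\phi}$ are in affine bijection via $\psi\mapsto id_{M_{3r}}\otimes\psi$, and this bijection sends $\pi$ to $id_{M_{3r}}\otimes\pi$. Hence one set is the singleton $\{\pi\}$ if and only if the other is the singleton $\{id_{M_{3r}}\otimes\pi\}$. Together with \prettyref{cor:3-1}, this gives the first claim.

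\textbf{Hyperrigidity.} I will use Arveson's characterization: $S$ is hyperrigid in $A$ if and only if, for every (nondegenerate) representation $\sigma$ of $A$, $\sigma|_{S}$ has the unique extension property. The argument runs in three steps.
\begin{enumerate}
\item Every unital representation of $M_{3r}(A)$ on a Hilbert space $L$ is unitarily equivalent to $id_{M_{3r}}\otimes\sigma$ for a unital representation $\sigma$ of $A$ on $H=E_{11}L$. This is the standard structure of representations of matrix algebras over a unital $C^{*}$-algebra, obtained by using the images of the matrix units to write $L\cong H^{3r}$.
\item Unitary conjugation preserves the unique extension property.
\item For $\sigma$ a representation of $A$, $(id\otimes\sigma)|_{R(S)}=\widehat{\sigma|_{S}}$, so \prettyref{cor:3-1} gives that $\sigma|_{S}$ has the unique extension property if and only if $(id\otimes\sigma)|_{R(S)}$ does.
\end{enumerate}
Quantifying over all $\sigma$ yields the equivalence.

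\textbf{Main obstacle.} The delicate point is which definition of hyperrigidity is in force. If hyperrigidity is taken in Arveson's original sense, via sequences of UCP maps $\phi_{n}$ with $\phi_{n}(s)\to s$ for $s\in S$ forcing $\phi_{n}(a)\to a$ on $A$, then I would rely on Arveson's theorem that, for separable $A$ (which holds here, $S$ being finite dimensional), hyperrigidity is equivalent to the unique extension property for all representations. Applying that theorem on both sides reduces the claim to the argument above.

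A secondary check is that the unital versus nondegenerate conventions match. Both $A$ and $M_{3r}(A)$ are unital, so nondegenerate representations are unital and the conventions agree.
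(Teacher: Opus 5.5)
Your proposal is correct and follows the paper's proof. Irreducibility passes through amplification, and the boundary statement then comes from the bijection of Theorem~\ref{thm:2-1} (equivalently Corollary~\ref{cor:3-1}); hyperrigidity follows from Arveson's characterization together with the fact that every unital representation of $M_{3r}(A)$ is unitarily equivalent to some $id_{M_{3r}}\otimes\sigma$.

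One small correction concerns your irreducibility check. The commutant of $(id_{M_{3r}}\otimes\pi)(M_{3r}(A))=M_{3r}(\pi(A))$ is $I_{3r}\otimes\pi(A)'$, not $M_{3r}(\pi(A)')$. The latter is the commutant of the diagonal ampliation $a\mapsto I_{3r}\otimes\pi(a)$, and $M_{3r}\otimes\mathbb{C}\neq\mathbb{C}$. The conclusion still stands, because $I_{3r}\otimes\pi(A)'=\mathbb{C}I$ exactly when $\pi(A)'=\mathbb{C}I_H$.
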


\begin{proof}
The representation $\pi$ is irreducible if and only if $id_{M_{3r}}\otimes\pi$
is irreducible. The assertion about boundary representations therefore
follows from \prettyref{cor:3-1}.

Since $S$ is finite dimensional, $A=C^{*}\left(S\right)$ is separable.
Every nondegenerate representation of $M_{3r}\left(A\right)$ is unitarily
equivalent to 
\[
id_{M_{3r}}\otimes\pi
\]
for a nondegenerate representation $\pi$ of $A$. The assertion about
hyperrigidity now follows from \prettyref{cor:3-1} and Arveson's
characterization \cite{MR2823981}. 
\end{proof}

\begin{cor}
\label{cor:3-3} Arveson's hyperrigidity conjecture fails for three
dimensional operator systems. 
\end{cor}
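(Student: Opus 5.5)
The plan is to feed Scherer's finite dimensional counterexample \cite{MR4956742} into the reduction of \prettyref{thm:2-1}, and to let \prettyref{cor:3-2} transport both of its defining properties. Scherer gives a finite dimensional operator system $S\subset A=C^{*}(S)$ with two features. Every irreducible representation of $A$ is a boundary representation for $S$, but $S$ is not hyperrigid in $A$. Let $d=\dim S$ and $r=\max\{1,\lceil\sqrt{d-1}\rceil\}$, and form $R(S)\subset M_{3r}(A)$. By \prettyref{thm:2-1}, $R(S)$ is three dimensional and $C^{*}(R(S))=M_{3r}(A)$, so $R(S)$ is an operator system generating the relevant $C^{*}$-algebra.

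First, every irreducible representation of $M_{3r}(A)$ is a boundary representation for $R(S)$. Let $\sigma$ be an irreducible representation of $M_{3r}(A)$. As in the proof of \prettyref{cor:3-2}, it is unitarily equivalent to $id_{M_{3r}}\otimes\pi$ for some nondegenerate representation $\pi$ of $A$. This $\pi$ is irreducible: its commutant embeds in the commutant of $id_{M_{3r}}\otimes\pi$ via $t\mapsto I_{3r}\otimes t$. By Scherer's hypothesis, $\pi$ is a boundary representation for $S$. Then \prettyref{cor:3-2} says $id_{M_{3r}}\otimes\pi$ is a boundary representation for $R(S)$. Being a boundary representation is invariant under unitary equivalence, so $\sigma$ is one as well.

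Second, $R(S)$ is not hyperrigid in $M_{3r}(A)$. This is immediate from the hyperrigidity equivalence in \prettyref{cor:3-2}, since $S$ is not hyperrigid in $A$. Combining the two steps, $R(S)$ is a three dimensional operator system in which every irreducible representation of its generated $C^{*}$-algebra is a boundary representation, yet which is not hyperrigid. Arveson's conjecture asserts exactly the contrary, so it fails in dimension three.

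The main point needing care is the first step. The argument must make sure that every irreducible representation of the amplified algebra really arises as an amplification of an irreducible representation of $A$, up to unitary equivalence. It must also confirm that boundary status passes across unitary equivalence. Both are standard facts about $M_{n}(A)$ and the unique extension property, and they are already implicit in the proof of \prettyref{cor:3-2}. Otherwise the corollary is a direct combination of the earlier results with Scherer's example.
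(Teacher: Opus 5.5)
Your proposal is correct and follows the paper's proof: you take Scherer's finite dimensional counterexample and transfer both of its properties to the three dimensional system $R(S)$ via Corollary~\ref{cor:3-2}. The only difference is that you make explicit a step the paper leaves implicit, namely that every irreducible representation of $M_{3r}(A)$ is unitarily equivalent to $id_{M_{3r}}\otimes\pi$ with $\pi$ irreducible, so the boundary-representation half of Corollary~\ref{cor:3-2} covers all irreducible representations of $C^{*}(R(S))$.
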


\begin{proof}
Let $S$ be Scherer's finite dimensional counterexample \cite{MR4956742}.
Every irreducible representation of $C^{*}\left(S\right)$ is a boundary
representation for $S$, but $S$ is not hyperrigid. By \prettyref{cor:3-2},
the same two statements hold for $R\left(S\right)$. The latter has
dimension three. 
\end{proof}

The reduction also shows that linear dimension by itself says little
about the complexity of UCP extension problems. After finite matrix
amplification, a three dimensional operator system can carry the full
extension set of an arbitrary finite dimensional operator system.
Thus low linear dimension does not imply a simple UCP extension theory.

The finite dimensional hypothesis is not essential to the reduction.
For a separable operator system, the finite matrix block containing
$S$ can be replaced by a compact diagonal block built from a countable
dense subset of $S$. The same argument then runs with compact matrix
units in place of finite matrix units. The argument is the same in
substance, so we omit it.

\bibliographystyle{amsalpha}
\bibliography{ref}

\end{document}